\newtheorem{theorem}{Theorem}
\theoremstyle{plain}
\newtheorem{corollary}{Corollary}
\newtheorem{definition}{Definition}
\newtheorem{lemma}{Lemma}
\newtheorem{remark}{Remark}
\numberwithin{equation}{section}
\begin{document}

\title[The Hermite--Hadamard inequality on hypercuboid] {The Hermite--Hadamard inequality on hypercuboid}

\author[M.W. Alomari]{Mohammad W. Alomari}

\address{Department of Mathematics,
Faculty of Science and Information Technology, Irbid National
University, 21110 Irbid, Jordan.} \email{mwomath@gmail.com}

%\date{October 08, 2009.}
\date{\today}
\subjclass[2000]{Primary 26B25; Secondary 26B35, 52A20, 52A41,
26D07.}

\keywords{Convex function, Hermite--Hadamard's inequality,
Jensen's inequality.}

\begin{abstract}
Given any ${\bf{a}}: = \left( {a_1 ,a_2 , \ldots ,a_n } \right)$
and ${\bf{b}}: = \left( {b_1 ,b_2 , \ldots ,b_n } \right)$ in
$\mathbb{R}^n$. The $\textbf{n}$-fold convex function defined on
$\left[ {{\bf{a}},{\bf{b}}} \right]$, ${\bf{a}},{\bf{b}} \in
\mathbb{R}^n$ with ${\bf{a}}<{\bf{b}}$ is a convex function in
each variable separately. In this work we prove an inequality of
Hermite-Hadamard type for $\textbf{n}$-fold convex functions.
Namely, we establish the inequality
\begin{align*} f\left( {\frac{{{\bf{a}} + {\bf{b}}}}{2}} \right)
\le \frac{1}{{{\bf{b}} - {\bf{a}}}}\int_{\bf{a}}^{\bf{b}} {f\left(
{\bf{x}} \right)d{\bf{x}}}  \le \frac{1}{{2^n
}}\sum\limits_{\bf{c}} {f\left( {\bf{c}} \right)},
\end{align*}
where $\sum\limits_{\bf{c}} {f\left( {\bf{c}} \right)} : =
\sum\limits_{\mathop {c_i  \in \left\{ {a_i ,b_i }
\right\}}\limits_{1 \le i \le n} } {f\left( {c_1, c_2, \ldots ,c_n
} \right)}$. Some other related result are given.
\end{abstract}
\maketitle

%=============================================================================
\section{Introduction}
%=============================================================================

The classical Hermite-Hadamard inequality
\begin{align}
\label{HH}f\left( {\frac{{a + b}}{2}} \right) \le \frac{1}{{b -
a}}\int_a^b {f\left( t \right)dt}  \le \frac{{f\left( a \right) +
f\left( b \right)}}{2}
\end{align}
holds for all convex functions defined on a real interval
$[a,b]$.\\

Along the past thirty years, several authors give an attention for
various kind of this inequality and related type inequalities.
Indeed the history of (\ref{HH}) is very long to summarize in one
or two paragraph, however, we can simply say without any worry,
the real work over all these thirty years started in 1992 by
Dragomir \cite{Dragomir1}. In literature, the referenced work
\cite{Dragomir1} was considered as base to study and investigate
(\ref{HH}) by many other authors later.

A progressive work make many interested authors to generalize
(\ref{HH}) and establish a number of formulation in various forms.
In sequence of papers, Dragomir proved various inequalities of
Hermite-Hadamard type for several assumption for the functions
involved; e.g., convex mappings defined on a disk in the plane and
convex mappings defined on a ball in the space . For a
comprehensive work regarding (\ref{HH}) the reader may refer to
\cite{Dragomir1}.

In 2006, de la Cal and C\'{a}rcamo \cite{Cal} studied the
Hermite-Hadamard type for convex functions on $n$-dimensional
convex bodies by translating the problem into of finding
appropriate majorants of the involved random vector for the usual
convex order. Two main results was obtained in \cite{Cal} the
first one regarding mappings defined on polytopes in
$\mathbb{R}^n$, while the second result discussed (\ref{HH}) for
symmetric random vectors taking values in a closed ball for a
given (but arbitrary) norm on $\mathbb{R}^n$, (see also
\cite{Cal1}). In 2008,  a formulation on simplicies was presented;
the key idea of the presented approach was passed through a volume
type formula and its higher dimensional generalization. In 2009,
by using of a stochastic approach, de la Cal \emph{et. al.}
established a multidimensional version of the classical
Hermite-Hadamard inequalities which holds for convex functions on
general convex bodies. In 2012, Yang \cite{Yang} proved an
extension of (\ref{HH}) for functions defined on a convex subsets
of $\mathbb{R}^3$, indeed the author introduced a version of
(\ref{HH}) for function $f$ defined on an annulus domain.
Recently, Moslehian \cite{Moslehian} introduced several matrix and
operator inequalities of Hermite–-Hadamard type and presented some
operator inequalities of Hermite-Hadamard type in which the
classical convexity was used instead of the operator convexity.
For more details, generalization and counterparts the reader may
refer to
\cite{alomari}--\cite{Yang} and the references therein.\\

Let us consider the bi-dimensional interval $  \Delta : = \left[
{a,b} \right] \times \left[ {c,d} \right]  $ in $ \mathbb{R}^2 $
with $a < b$ and $c < d$. Recall that the mapping $ f:\Delta \to
\mathbb{R} $ is convex in $\Delta$ if
\begin{eqnarray*}
f\left( {\lambda x + \left( {1 - \lambda } \right)z,\lambda y +
\left( {1 - \lambda } \right)w} \right) \le \lambda f\left( {x,y}
\right) + \left( {1 - \lambda } \right)f\left( {z,w} \right)
\end{eqnarray*}
holds for all $(x,y), (z, w)$ $\in \Delta$ and $\lambda \in \left[
{0,1} \right]$.\\

Dragomir \cite{Dragomir2} established a new concept of convexity
which is called the co--ordinated convex function, as follows:

A function $ f:\Delta \to \mathbb{R} $ is convex in $\Delta$ is
called co--ordinated convex on $\Delta$ if the partial mappings $
f_y :\left[ {a,b} \right] \to \mathbb{R} $, $ f_y \left( u \right)
= f\left( {u,y} \right)$ and $ f_x :\left[ {c,d} \right] \to
\mathbb{R} $, $ f_x \left( v \right) = f\left( {x,v} \right)$, are
convex for all $ y \in \left[ {c,d} \right] $ and $ x \in \left[
{a,b} \right] $.\\

In \cite{Dragomir2}, Dragomir established the following similar
inequality of Hadamard's type for co--ordinated convex mapping on
a rectangle from the plane $\mathbb{R}^2 $.
%==================================================================================
\begin{theorem}\label{thm1}
%==================================================================================
Suppose that $ f:\Delta \to \mathbb{R} $ is co--ordinated convex
on $\Delta$. Then one has the inequalities
\begin{eqnarray}
\label{eq2}f\left( {\frac{{a + b}}{2},\frac{{c + d}}{2}}
\right)&\le& \frac{1}{{\left( {b - a} \right)\left( {d - c}
\right)}}\int\limits_a^b {\int\limits_c^d {f\left( {x,y}
\right)dydx} }\nonumber\\
&\le& \frac{{f\left( {a,c} \right) + f\left( {a,d} \right) +
f\left( {b,c} \right) + f\left( {b,d} \right)}}{4}
\end{eqnarray}
The above inequalities are sharp.
\end{theorem}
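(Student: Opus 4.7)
My plan is to derive both inequalities by iterating the one-dimensional Hermite--Hadamard inequality \eqref{HH} in each variable separately, exploiting the definition of co--ordinated convexity, which guarantees that every partial map is an ordinary convex function on a compact interval.

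First I would fix $x\in[a,b]$ and apply \eqref{HH} to the partial function $f_x:[c,d]\to\mathbb{R}$, $f_x(v)=f(x,v)$, which is convex by hypothesis. This yields
\begin{equation*}
f\!\left(x,\tfrac{c+d}{2}\right)\le \frac{1}{d-c}\int_c^d f(x,y)\,dy\le \frac{f(x,c)+f(x,d)}{2}.
\end{equation*}
Integrating this chain over $x\in[a,b]$ and dividing by $b-a$ gives a sandwich for the double integral in terms of two single integrals: one of $x\mapsto f(x,\tfrac{c+d}{2})$ on the left, and the average of $\int_a^b f(x,c)\,dx$ and $\int_a^b f(x,d)\,dx$ on the right.

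To obtain the lower bound of \eqref{eq2}, I would apply \eqref{HH} in the $x$-variable to $g(x):=f(x,\tfrac{c+d}{2})$, which is convex on $[a,b]$ since $f$ is co--ordinated convex; the left inequality of \eqref{HH} gives $f\!\left(\tfrac{a+b}{2},\tfrac{c+d}{2}\right)\le \frac{1}{b-a}\int_a^b g(x)\,dx$, and chaining with the previous step yields the desired lower estimate. For the upper bound I would apply the right inequality of \eqref{HH} to each of the convex partial maps $x\mapsto f(x,c)$ and $x\mapsto f(x,d)$ on $[a,b]$, which produces $\frac{1}{b-a}\int_a^b f(x,c)\,dx\le\frac{f(a,c)+f(b,c)}{2}$ and an analogous estimate at $y=d$; averaging these two bounds completes the upper inequality of \eqref{eq2}.

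No step is really an obstacle, since the argument is just a clean two-fold iteration of \eqref{HH}; the only point that requires a moment of care is making sure Fubini's theorem applies so that the order of integration can be freely exchanged, which is immediate because a co--ordinated convex function on a compact rectangle is bounded and measurable. For the sharpness claim I would verify that equality holds throughout \eqref{eq2} for every affine function $f(x,y)=\alpha x+\beta y+\gamma$, which demonstrates that neither the constant $1$ on the left nor the constant $\tfrac14$ on the right can be improved.
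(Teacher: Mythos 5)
Your argument is correct, and it is essentially the classical proof of this result (it is the route Dragomir takes in the cited source): apply \eqref{HH} to the convex partial map $y\mapsto f(x,y)$ for each fixed $x$, integrate the resulting chain over $[a,b]$, then apply the left half of \eqref{HH} to the convex function $x\mapsto f\left(x,\tfrac{c+d}{2}\right)$ and the right half to $x\mapsto f(x,c)$ and $x\mapsto f(x,d)$. Note, however, that the present paper does not prove this theorem at all --- it is quoted from \cite{Dragomir2} --- and the machinery the paper actually develops for its general hypercuboid version \eqref{eq.HH} proceeds differently: there the author integrates the tensorized convexity inequality $f({\bf{t}}{\bf{a}}+({\bf{1}}-{\bf{t}}){\bf{b}})\le\sum_{\bf{c}}\left(\prod_i p_i\right)f({\bf{c}})$ over the parameter cube ${\bf{t}}\in[0,1]^n$ to get the upper bound in one shot, and uses a symmetrization of the arguments ${\bf{t}}{\bf{a}}+({\bf{1}}-{\bf{t}}){\bf{b}}$ and ${\bf{t}}{\bf{b}}+({\bf{1}}-{\bf{t}}){\bf{a}}$ for the lower bound. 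Your variable-by-variable iteration is more elementary and generalizes to $n$ dimensions by an easy induction, producing the intermediate ``mixed'' bounds (e.g.\ $\frac{1}{b-a}\int_a^b f\left(x,\tfrac{c+d}{2}\right)dx$) that Dragomir also records and that the parameter-cube approach does not exhibit; the paper's approach, in exchange, handles all coordinates simultaneously and transfers verbatim to the weighted (Fej\'{e}r) setting. Your remarks on Fubini and on boundedness/measurability of separately convex functions are the right technical caveats, and checking equality for affine $f$ does establish sharpness, matching the paper's choice of $f({\bf{x}})=\prod_i x_i$.
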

In \cite{alomari}, Alomari proved the weighted version of
(\ref{eq2}) which is known as Fej\'{e}r inequality, as follows:
\begin{theorem}
Let $f: [a,b] \times [c,d] \to \mathbb{R}$ be a co--ordinated
convex function, Then the double inequality
\begin{eqnarray}
f\left( {\frac{{a + b}}{2},\frac{{c + d}}{2}} \right) &\le&
\frac{{\int\limits_a^b {\int\limits_c^d {f\left( {x,y}
\right)p\left( {x,y} \right)dydx} } }}{{\int\limits_a^b
{\int\limits_c^d {p\left( {x,y} \right)dydx} } }}
\\
&\le& \frac{{f\left( {a,c} \right) + f\left( {c,d} \right) +
f\left( {b,c} \right) + f\left( {b,d} \right)}}{4}\nonumber
\end{eqnarray}
holds, where $p:\left[ {a,b} \right] \times \left[ {c,d} \right]
\to \mathbb{R}$ is positive, integrable, and symmetric about $x =
\frac{{a + b}}{2}$ and $y = \frac{{c + d}}{2}$. The above
inequalities are sharp.
\end{theorem}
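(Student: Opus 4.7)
The plan is to prove the two inequalities separately by combining the coordinated convexity of $f$ with the assumed symmetry of $p$ about $\left(\frac{a+b}{2}, \frac{c+d}{2}\right)$, using essentially the same reflection tricks that give the unweighted theorem, but carried out in two variables at once.

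For the left inequality, I would invoke the midpoint form of convexity in each variable separately. Applied to the partial mapping $x \mapsto f(x,y)$ with the symmetric pair $\{x,\, a+b-x\}$, and then to $y \mapsto f(u,y)$ with the pair $\{y,\, c+d-y\}$, one obtains the pointwise estimate
\[
4\, f\!\left(\frac{a+b}{2}, \frac{c+d}{2}\right) \le f(x,y) + f(a+b-x,\, y) + f(x,\, c+d-y) + f(a+b-x,\, c+d-y).
\]
Multiplying through by $p(x,y)$ and integrating on $[a,b]\times[c,d]$, the changes of variable $x \mapsto a+b-x$ and $y \mapsto c+d-y$, together with the invariance of $p$ under each of these reflections, collapse the four integrals on the right hand side into $4\iint f\,p\,dy\,dx$. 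Dividing by $\iint p\,dy\,dx$ yields the left inequality.

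For the right inequality, I would use the corner bound from bilinear interpolation. Convexity in $x$ gives $f(x,y) \le \frac{b-x}{b-a} f(a,y) + \frac{x-a}{b-a} f(b,y)$, and bounding each of $f(a,y), f(b,y)$ by convexity in $y$ produces
\[
f(x,y) \le \frac{(b-x)(d-y)}{(b-a)(d-c)} f(a,c) + \frac{(b-x)(y-c)}{(b-a)(d-c)} f(a,d) + \frac{(x-a)(d-y)}{(b-a)(d-c)} f(b,c) + \frac{(x-a)(y-c)}{(b-a)(d-c)} f(b,d).
\]
Multiplying by $p(x,y)$ and integrating, the symmetry of $p$ again plays the decisive role: under $x \mapsto a+b-x$ the factors $(b-x)$ and $(x-a)$ swap, and similarly for the $y$-reflection, so the four integrals $\iint \lambda_{u,v}(x,y)\, p(x,y)\,dy\,dx$ of the four corner-coefficients coincide. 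Since the four $\lambda_{u,v}$ sum to $1$, each such integral equals $\frac{1}{4}\iint p\,dy\,dx$, and dividing by $\iint p\,dy\,dx$ delivers the right inequality.

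The only step requiring real care is the symmetry bookkeeping in the right inequality, where the $x$- and $y$-reflections must be applied independently to identify all four corner-coefficient integrals with one another; everything else is the familiar Fejér argument extended by one dimension. Sharpness of both inequalities is immediate, since a constant (or any affine) function produces equality throughout for every admissible symmetric weight $p$.
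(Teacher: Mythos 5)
Your proof is correct, but it takes a different route from the one in the paper. The paper quotes this two--dimensional statement from \cite{alomari} and proves its $n$--dimensional version (Theorem \ref{thm3}) by a half--domain argument: it uses the symmetry of $p$ to write $\int p f$ as an integral over $\left[{\bf a},\frac{{\bf a}+{\bf b}}{2}\right]$ of $\left[f({\bf x})+f({\bf a}+{\bf b}-{\bf x})\right]p({\bf x})$, and then bounds the bracket from below by $2f\left(\frac{{\bf a}+{\bf b}}{2}\right)$ and from above by $2\cdot 2^{-n}\sum_{\bf c}f({\bf c})$ via the separate majorization Lemma \ref{lemma} (the four--point inequality $f(x_1)+f(x_2)\le f(y_1)+f(y_2)$ for $y_1\le x_1\le x_2\le y_2$ with $x_1+x_2=y_1+y_2$). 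You instead establish two pointwise estimates valid on the whole rectangle --- the four--fold reflected midpoint bound for the lower estimate, and the tensor--product (bilinear interpolation) corner bound for the upper estimate --- and then integrate each against $p$, using the two independent reflections $x\mapsto a+b-x$ and $y\mapsto c+d-y$ to identify the resulting weighted integrals. Your version has two advantages: it needs no auxiliary lemma, and for the upper bound it avoids the central--reflection step $f(x,y)+f(a+b-x,c+d-y)\le\frac{1}{2}\sum f(\text{corners})$, which in the paper's scheme requires iterating the one--coordinate lemma and is not spelled out there; your observation that the four corner--coefficient integrals are pairwise equal and sum to $\int\!\!\int p$ is clean and complete. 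Two small remarks: the displayed right--hand side in the statement contains the obvious typo $f(c,d)$ for $f(a,d)$, and your proof (correctly) establishes the intended version; also, for sharpness the paper uses $f(x,y)=xy$ with $p\equiv 1$, while your constant/affine example is an equally valid witness of equality.
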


In this work, a new inequality of Hermite-Hadamard type on
hypercuboid is proved.

%========================================================================================
\section{${\bf{n}}$-fold convex functions}
%========================================================================================
Given any ${\bf{a}}: = \left( {a_1 ,a_2 , \ldots ,a_n } \right)$
and ${\bf{b}}: = \left( {b_1 ,b_2 , \ldots ,b_n } \right)$ in
$\mathbb{R}^n$, we define
\begin{equation*}
{\bf{a}} \le {\bf{b}} \Longleftrightarrow a_i  \le b_i ,\forall
i,\,\,1 \le i \le n.
\end{equation*}
Clearly, this is a partial order on $\mathbb{R}^n$, and it may be
called the $\mathbf{product \,\,order}$ or the
$\mathbf{componentwise \,\,order}$ on $\mathbb{R}^n$. If $n > 1$,
then the product order on $\mathbb{R}^n$ is not a total order; for
example, if ${\bf{x}} := (1, 0, 0, \cdots , 0)$ and ${\bf{y}} :=
(0, 1, 0, \cdots , 0)$, then neither $x \le y$ nor $y \le x$.

Let
$$I_{{\bf{a}},{\bf{b}}} : = \prod\limits_{i = 1}^n {I_{a_i ,b_i }
}  = I_{a_1 ,b_1 }  \times \cdots  \times I_{a_n ,b_n }.$$

A subset $I$ of $\mathbb{R}^n$ is said to be an $n$-interval if
$I_{{\bf{a}},{\bf{b}}} \subseteq I$ for every ${\bf{a}},{\bf{b}}
\in I$. For example, if $I_1, \ldots , I_n$ are intervals in
$\mathbb{R}$, then $I_1  \times  \cdots  \times I_n$  is an
$n$-interval. Furthermore, an $n$-interval of the form $I_1 \times
\cdots  \times I_n$, where each of the $I_1  \times  \cdots \times
I_n$ is a closed and bounded interval in $\mathbb{R}$, is called a
$\textbf{hypercuboid}$ in $\mathbb{R}^n$.

Throughout this paper, we will consider, for all $a_i ,b_i  \in
\mathbb{R}$, $\left[ {{\bf{a}},{\bf{b}}} \right]: =
\prod\limits_{1 \le i \le n} {\left[ {a_i ,b_i } \right]}$, and
${\bf{c}} = \left( {c_1 ,c_2 , \ldots ,c_n } \right)$, $c_i  \in
\left\{ {a_i ,b_i } \right\}$, $1 \le i \le n$. Also, for $x_i
,y_i  \in \left[ {a_i ,b_i } \right]$ and $t_i \in [0,1]$, define
$${\bf{t}} {\bf{x}}= \left( {t_1 x_1 ,t_2 x_2 , \ldots ,t_n x_n }
\right),$$ and
$$\left( {{\bf{1}} - {\bf{t}}} \right){\bf{y}} = \left( {\left( {1 - t_1 } \right)y_1 ,\left( {1 - t_2 } \right)y_2 , \ldots ,\left( {1 - t_n } \right)y_n } \right).$$

Let $f: \left[ {{\bf{a}},{\bf{b}}} \right]\subseteq \mathbb{R}^n
\to \mathbb{R}$, for the vector ${\bf{c}}$, we define
\begin{align*}
\sum\limits_{{\bf{c}}} {f\left( {{\bf{c}}} \right)} : =
\sum\limits_{\mathop {c_{i}   \in \left\{ {a_{i} ,b_{i} }
\right\}}\limits_{1 \le i\le n} } {f\left( {c_1 ,c_2 , \ldots ,c_n
} \right)},
\end{align*}
for all possible choices of $c_{i}  \in \left\{ {a_{i} ,b_{i} }
\right\}$, ($i=1,2,\cdots,n$).

\begin{definition}
A subset $\mathbb{D} \subseteq \mathbb{R}^n$ is said to be
${\bf{n}}$-fold convex if and only if whenever ${\bf{x}},{\bf{y}}
\in \mathbb{D}$ then ${\bf{t}} {\bf{x}}+ \left( {{\bf{1}} -
{\bf{t}}} \right){\bf{y}} \in \mathbb{D}$.
\end{definition}

\begin{corollary}
Every convex subset of $\mathbb{R}^n$ is an ${\bf{n}}$-fold
convex, and the converse is not true in general.
\end{corollary}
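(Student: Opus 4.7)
My plan is to verify both parts of the corollary directly from the definitions. For the inclusion convex implies $\mathbf{n}$-fold convex, I would take a convex $\mathbb{D}$ and any pair ${\bf{x}}, {\bf{y}} \in \mathbb{D}$, then examine the componentwise combination ${\bf{t}} {\bf{x}} + ({\bf{1}} - {\bf{t}}) {\bf{y}}$. The key observation is that when ${\bf{t}} = (t, t, \ldots, t)$ is a diagonal vector, the Hadamard-style expression collapses to the ordinary convex combination $t {\bf{x}} + (1-t) {\bf{y}}$, which lies in $\mathbb{D}$ by convexity. This gives the first part in the diagonal case without any extra work.

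For the converse --- that an $\mathbf{n}$-fold convex set need not be convex --- I would look for a concrete counterexample in low dimension. The most natural candidates come from axis-aligned hypercuboids: each hypercuboid is $\mathbf{n}$-fold convex by inspection from the way the componentwise product is built, and a carefully chosen non-convex union in $\mathbb{R}^2$ (for instance, two rectangles glued along a common edge to form an L-shape) has a chance to retain the componentwise-combination property while failing ordinary convexity. The verification reduces to checking, coordinate by coordinate, that for any two points ${\bf{x}}, {\bf{y}}$ in the chosen set, every expression ${\bf{t}} {\bf{x}} + ({\bf{1}} - {\bf{t}}) {\bf{y}}$ with ${\bf{t}} \in [0,1]^n$ still lies in the set, while exhibiting two points in the set whose straight-line segment escapes it.

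The main obstacle is the first direction beyond the diagonal case. If the quantifier over ${\bf{t}}$ in the definition of $\mathbf{n}$-fold convex ranges over the full cube $[0,1]^n$, then ordinary convexity alone is insufficient: the unit disk in $\mathbb{R}^2$ contains ${\bf{x}} = (1,0)$ and ${\bf{y}} = (0,1)$, yet choosing ${\bf{t}} = (1,0)$ produces the point $(1,1)$, which lies outside the disk. Consequently, before writing out details I would pin down the intended scope of ${\bf{t}}$ in the definition; under the reading that implicitly restricts to the diagonal $\{(t,t,\ldots,t): t \in [0,1]\}$, the first part is immediate from ordinary convexity, and the counterexample for the converse would then need to be chosen to separate the two notions in this diagonal sense.
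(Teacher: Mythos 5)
Your proposal does not arrive at a proof, but the obstacle you flag is real, and the paper does not resolve it either: its entire argument for this corollary is the sentence ``Follows directly from the definition,'' so there is no substantive proof to measure yours against. Your disk example is correct and decisive. With ${\bf x}=(1,0)$, ${\bf y}=(0,1)$ and ${\bf t}=(1,0)$, the componentwise expression ${\bf t}{\bf x}+({\bf 1}-{\bf t}){\bf y}$ as defined in Section 2 equals $(1,1)$, which lies outside the closed unit disk; hence, if ${\bf t}$ ranges over all of $[0,1]^n$ (the only reading under which the paper's first figure could depict a non-convex ${\bf 2}$-fold convex set), the forward implication ``convex $\Rightarrow$ ${\bf n}$-fold convex'' is false. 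If instead ${\bf t}$ is restricted to the diagonal $(t,\dots,t)$, the expression collapses to the ordinary convex combination, the two notions of set coincide, and the ``converse is not true'' clause fails instead.

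Your fallback plan for the converse also cannot be completed under the full-cube reading. Requiring ${\bf t}{\bf x}+({\bf 1}-{\bf t}){\bf y}\in\mathbb{D}$ for all ${\bf t}\in[0,1]^n$ forces $\mathbb{D}$ to contain the entire axis-parallel box $\prod_{i}\left[\min(x_i,y_i),\max(x_i,y_i)\right]$ spanned by any two of its points, since the coordinates $t_i x_i+(1-t_i)y_i$ vary independently; that box contains the straight segment joining ${\bf x}$ and ${\bf y}$, so every such set is automatically convex. In particular the L-shape you propose, and the set shown in the paper's first figure, are not ${\bf 2}$-fold convex in this sense: taking two points in different arms of the L and $t=(1,0)$ lands at the missing corner. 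So under either reading one clause of the corollary is false; the defect lies in the statement and in the definition of an ${\bf n}$-fold convex \emph{set}, not in your argument. (The companion definition for \emph{functions} on a fixed hypercuboid $[{\bf a},{\bf b}]$ is unaffected, since there it just encodes separate convexity in each variable.)
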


\begin{proof}
Follows directly from the definition.
\end{proof}

There is a subset $\mathbb{D} \subseteq \mathbb{R}^n$ which is
${\bf{n}}$-fold convex but is not convex. For example, consider
$\mathbb{D} \subset \mathbb{R}^2$, in the Figure (\ref{figer1}).
\begin{figure}[!h]
\includegraphics[angle=0,width=2in]{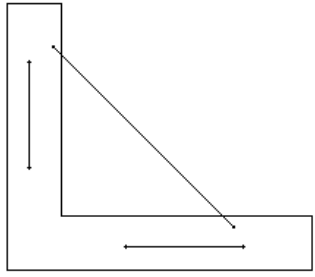}
\caption{${\bf{2}}$-fold convex set which is not convex.}
\label{figer1}
\end{figure}
On the other hand, there is a subset $\mathbb{D} \subseteq
\mathbb{R}^n$ which is not convex nor ${\bf{n}}$-fold, see the
Figure (\ref{figer2}).
\begin{figure}[!h]
\includegraphics[angle=0,width=2in]{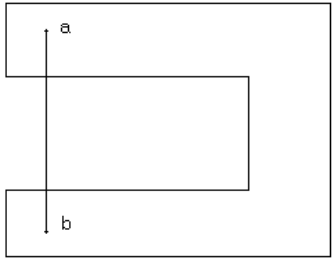}
\caption{A non convex set nor ${\bf{2}}$-fold convex.}
\label{figer2}
\end{figure}

\begin{definition}
A function $f:\left[ {{\bf{a}},{\bf{b}}} \right] \to \mathbb{R}$
is said to be ${\bf{n}}$-fold convex or convex on the coordinates
if and only if the inequality
\begin{align}
\label{eq2.1}f\left( {{\bf{t}} {\bf{x}}+ \left( {{\bf{1}} -
{\bf{t}}} \right){\bf{y}}} \right) \le \left( {\prod\limits_{1 \le
i \le n} {p_i } } \right)\sum\limits_{\bf{c}} {f\left( {\bf{c}}
\right)},
\end{align}
holds, for all ${\bf{x}}, {\bf{y}} \in  \left[ {{\bf{a}},{\bf{b}}}
\right]$ and ${\bf{t}} \in  \left[ {{\bf{0}},{\bf{1}}} \right]$,
where,
\begin{align}
\label{eq2.2}p_i  = \left\{ \begin{array}{l}
 t_i ,\,\,\,\,\,\,\,\,\,\,\,\,\,\,\,\,\text{if}\,\,\,\,\,\,\,c_i  = a_i  \\
 1 - t_i ,\,\,\,\,\,\,\text{if}\,\,\,\,\,\,\,c_i  = b_i  \\
 \end{array} \right.
\end{align}
for all $1\le i \le n$. Equivalently, $f$ is said to be
${\bf{n}}$-fold convex on $\left[ {{\bf{a}},{\bf{b}}} \right]$ iff
$f$ is convex in each coordinate separately on $[a_i,b_i]$ for all
$i=1,2,\cdots, n$. On the other hand, $f$ is called
${\bf{n}}$-fold concave if the inequality (\ref{eq2.1}) is
reversed.
\end{definition}

\begin{corollary}
Every  convex function defined on $\left[ {{\bf{a}},{\bf{b}}}
\right]\subseteq\mathbb{R}^n$ is ${\bf{n}}$-fold convex, and the
converse is not true in general.
\end{corollary}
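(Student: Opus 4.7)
The plan is to split the claim into the two pieces it asserts: (i) joint convexity on $[\mathbf{a},\mathbf{b}]$ implies $\mathbf{n}$-fold convexity, and (ii) the converse can fail.

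For direction (i), I would work from the equivalent characterization supplied in the definition, namely that $\mathbf{n}$-fold convexity is the same as being convex in each coordinate separately. Fix an index $k$ and fix the remaining coordinates at any values $x_i \in [a_i,b_i]$ ($i\neq k$). For any $u,v\in[a_k,b_k]$ and $\lambda\in[0,1]$, form the two points $\mathbf{p}=(x_1,\dots,x_{k-1},u,x_{k+1},\dots,x_n)$ and $\mathbf{q}=(x_1,\dots,x_{k-1},v,x_{k+1},\dots,x_n)$. Then $\lambda\mathbf{p}+(1-\lambda)\mathbf{q}$ differs from $\mathbf{p}$ and $\mathbf{q}$ only in the $k$-th slot, where it equals $\lambda u+(1-\lambda)v$. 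Applying joint convexity of $f$ to $\mathbf{p},\mathbf{q}$ immediately gives
\begin{equation*}
f_{x_1,\dots,\widehat{x_k},\dots,x_n}(\lambda u+(1-\lambda)v)\le \lambda f_{x_1,\dots,\widehat{x_k},\dots,x_n}(u)+(1-\lambda)f_{x_1,\dots,\widehat{x_k},\dots,x_n}(v),
\end{equation*}
so each partial section is convex; hence $f$ is $\mathbf{n}$-fold convex. This step is essentially mechanical.

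For direction (ii), I would produce an explicit example in the smallest nontrivial case $n=2$. The standard choice is $f(x,y)=xy$ on $[0,1]\times[0,1]$, which is affine (hence convex) in each variable separately, so it is $\mathbf{2}$-fold convex. However, joint convexity fails: at the midpoint of the segment from $(0,1)$ to $(1,0)$ one has $f(1/2,1/2)=1/4$, while $\tfrac12 f(0,1)+\tfrac12 f(1,0)=0$, violating the convexity inequality. (Equivalently, the Hessian $\bigl(\begin{smallmatrix}0&1\\1&0\end{smallmatrix}\bigr)$ is indefinite.) This exhibits a separately-convex, hence $\mathbf{n}$-fold convex, function which is not convex. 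The analogous example $f(x_1,\dots,x_n)=x_1x_2$ extends this to arbitrary $n\ge 2$.

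The only thing that requires any thought is making sure the chosen counterexample genuinely lies in the domain prescribed by the definition and that the evaluation of joint convexity at a concrete midpoint is clean; both are immediate for $f(x,y)=xy$ on the unit square. There is no real obstacle — the content of the corollary is the observation that $\mathbf{n}$-fold convexity is a strictly weaker, coordinate-wise condition.
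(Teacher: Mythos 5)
Your proof is correct, but it takes a genuinely different route from the paper's. For the forward implication the paper does not pass through the coordinate-wise characterization; it proves the defining product inequality $P(n)$ directly by induction on $n$: the base case $n=2$ applies convexity in the first slot with the second slot frozen at $z=t_2x_2+(1-t_2)y_2$, then in the second slot, and the inductive step peels off the $(k+1)$-st coordinate in the same way. Your argument instead restricts $f$ to axis-parallel segments to obtain convexity of every partial mapping and then invokes the equivalence, asserted inside the paper's definition of $\mathbf{n}$-fold convexity, between separate convexity and the product inequality; this is shorter, but it leans on that asserted equivalence, whose nontrivial half (separate convexity implies the product inequality) is essentially the induction the paper writes out, so be aware of what you are importing. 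Where you genuinely improve on the paper is the counterexample: the paper offers $f(x,y)=y$ on $[0,1]^2$ as a $\mathbf{2}$-fold convex function that is not convex, but that function is affine and hence convex, so the paper's example does not actually establish the failure of the converse. Your choice $f(x,y)=xy$, with $f(1/2,1/2)=1/4>0=\tfrac12 f(0,1)+\tfrac12 f(1,0)$, is a correct witness, and $f(x_1,\dots,x_n)=x_1x_2$ extends it to all $n\ge 2$.
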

\begin{proof}
Consider $f:\left[ {{\bf{a}},{\bf{b}}} \right] \to \mathbb{R}$ be
an ${\bf{n}}$-fold convex function. We carry out our proof using
induction.

Let
\begin{align}
P(n): f\left( {{\bf{t}} {\bf{x}}+ \left( {{\bf{1}} - {\bf{t}}}
\right){\bf{y}}} \right) \le \left( {\prod\limits_{1 \le i \le n}
{p_i } } \right)\sum\limits_{\bf{c}} {f\left( {\bf{c}}
\right)},\,\,\, n \in \mathbb{N}
\end{align}
holds, for all ${\bf{x}}, {\bf{y}} \in  \left[ {{\bf{a}},{\bf{b}}}
\right]$ and ${\bf{t}} \in  \left[ {{\bf{0}},{\bf{1}}} \right]$,
where,
\begin{align*}
\sum\limits_{\bf{c}} {f\left( {\bf{c}} \right)}=
\sum\limits_{\mathop {c_{i}   \in \left\{ {x_{i} ,y_{i} }
\right\}}\limits_{1 \le i\le n} } {f\left( {c_1 ,c_2 , \ldots ,c_n
} \right)},
\end{align*}
for all possible choices of $c_{i}  \in \left\{ {x_{i} ,y_{i}
}\right\}$, and
\begin{align*}
p_i  = \left\{ \begin{array}{l}
 t_i ,\,\,\,\,\,\,\,\,\,\,\,\,\,\,\,\,\text{if}\,\,\,\,\,\,\,c_i  = x_i  \\
 1 - t_i ,\,\,\,\,\,\,\text{if}\,\,\,\,\,\,\,c_i  = y_i  \\
 \end{array} \right.
\end{align*}
for all $1\le i \le n$.

For $n=2$, let $\left[ {{\bf{a}},{\bf{b}}} \right]=\left[
{a_1,b_1} \right]\times \left[ {a_2,b_2} \right]$, for each pair
${\bf{x}},{\bf{y}}\in \left[ {{\bf{a}},{\bf{b}}} \right]$;
${\bf{x}} = (x_1,x_2)$ and ${\bf{y}}=(y_1,y_2)$, since $f$ is
convex on $\left[ {{\bf{a}},{\bf{b}}} \right]$, then
\begin{align}
P(2): &f\left( {t_1 x_1  + \left( {1 - t_1 } \right)y_1 ,z}
\right)
\nonumber\\
&\le t_1 f\left( {x_1 ,z} \right) + \left( {1 - t_1 }
\right)f\left( {y_1 ,z} \right)\nonumber
 \\
&= t_1 f\left( {x_1 ,t_2 x_2  + \left( {1 - t_2 } \right)y_2 }
\right) + \left( {1 - t_1 } \right)f\left( {y_1 ,t_2 x_2  + \left(
{1 - t_2 } \right)y_2 } \right)\nonumber
  \\
&\le t_1 t_2 f\left( {x_1 ,x_2 } \right) + t_1 \left( {1 - t_2 }
\right)f\left( {x_1 ,y_2 } \right) + \left( {1 - t_1 } \right)t_2
f\left( {y_1 ,x_2 } \right)\nonumber
\\
&\qquad+ \left( {1 - t_1 } \right)\left( {1 - t_2 } \right)f\left(
{y_1 ,y_2 } \right)\nonumber
\\
&=\left( {\prod\limits_{i = 1}^2 {p_i } }
\right)\sum\limits_{\scriptstyle c_i  \in \left\{ {x_i ,y_i }
\right\} \hfill \atop
  \scriptstyle 1 \le i \le 2 \hfill} {f\left( {\bf{c}} \right)},
\end{align}
where ${\bf{c}}=(c_1,c_2)$, which mean that $f$ is ${\bf{2}}$-fold
convex on $\left[ {{\bf{a}},{\bf{b}}} \right]$.

For $n=k$, assume that $P(k)$ holds, and let $\left[
{{\bf{a}},{\bf{b}}} \right] =\prod\limits_{i = 1}^k {\left[ {a_i
,b_i } \right]} $, ${\bf{x}} = (x_1,x_2,\cdots,x_k)$ and
${\bf{y}}=(y_1,y_2,\cdots,y_k)$, since $f$ is ${\bf{k}}$-fold
convex on $\left[ {{\bf{a}},{\bf{b}}} \right]$, then
\begin{align}
f\left( {{\bf{t}}{\bf{x}} + \left( {1 - {\bf{t}}} \right){\bf{y}}}
\right) \le
 \left( {\prod\limits_{i = 1}^k {p_i } }
\right)\sum\limits_{\scriptstyle c_i  \in \left\{ {x_i ,y_i }
\right\} \hfill \atop
  \scriptstyle 1 \le i \le k \hfill} {f\left( {\bf{c}}
  \right)}\label{eq2.5}
\end{align}
for all $t\in [0,1]$ and ${\bf{x}},{\bf{y}} \in\left[
{{\bf{a}},{\bf{b}}} \right]$, where
\begin{align*}
p_i  = \left\{ \begin{array}{l}
 t_i ,\,\,\,\,\,\,\,\,\,\,\,\,\,\,\,\,\text{if}\,\,\,\,\,\,\,c_i  = x_i  \\
 1 - t_i ,\,\,\,\,\,\,\text{if}\,\,\,\,\,\,\,c_i  = y_i  \\
 \end{array} \right.
\end{align*}
for all $i=1,2,\cdots, k$.

It remains to show that $P(n)$ holds when $n=k+1$, therefore
\begin{align*}
P(k+1)&: f\left( {t_1 x_1  + \left( {1 - t_1 } \right)y_1 , \ldots
,t_{k + 1} x_{k + 1}  + \left( {1 - t_{k + 1} } \right)y_{k + 1} }
\right)
\\
&= f\left( {t_1 x_1  + \left( {1 - t_1 } \right)y_1 , \ldots ,t_k
x_k  + \left( {1 - t_k } \right)y_k ,t_{k + 1} x_{k + 1}  + \left(
{1 - t_{k + 1} } \right)y_{k + 1} } \right)
  \\
&= \left( {\prod\limits_{i = 1}^k {p_i } }
\right)\sum\limits_{\scriptstyle c_i  \in \left\{ {x_i ,y_i }
\right\} \hfill \atop
  \scriptstyle 1 \le i \le k \hfill} {f\left( {{\bf{c}},t_{k + 1} x_{k + 1}  + \left(
{1 - t_{k + 1} } \right)y_{k + 1}} \right)}
\\
&\le t_{k+1} \left( {\prod\limits_{i = 1}^k {p_i } }
\right)\sum\limits_{\scriptstyle c_i  \in \left\{ {x_i ,y_i }
\right\} \hfill \atop
  \scriptstyle 1 \le i \le k \hfill} {f\left( {{\bf{c}},x_{k + 1}} \right)}
\\
&\qquad+\left( {1 - t_{k + 1} } \right) \left( {\prod\limits_{i =
1}^k {p_i } } \right)\sum\limits_{\scriptstyle c_i  \in \left\{
{x_i ,y_i } \right\} \hfill \atop
  \scriptstyle 1 \le i \le k \hfill} {f\left( {{\bf{c}},y_{k + 1}} \right)}
,\,\,\,\,\,\,\,\text{(follows from (\ref{eq2.5}))}
  \\
&= \left( {\prod\limits_{i = 1}^{k + 1} {p_i } }
\right)\sum\limits_{\scriptstyle c_i  \in \left\{ {x_i ,y_i }
\right\} \hfill \atop
  \scriptstyle 1 \le i \le k+1 \hfill} {f\left( {\bf{c}} \right)}
\end{align*}
where,
\begin{align*}
p_i  = \left\{ \begin{array}{l}
 t_i ,\,\,\,\,\,\,\,\,\,\,\,\,\,\,\,\,\text{if}\,\,\,\,\,\,\,c_i  = x_i  \\
 1 - t_i ,\,\,\,\,\,\,\text{if}\,\,\,\,\,\,\,c_i  = y_i  \\
 \end{array} \right.
\end{align*}
for all $i=1,2,\cdots, k+1$. Hence, by mathematical induction,
$P(n)$ holds for all $n \in \mathrm{}\mathbb{N}$. On the other
hand, the function $f:[0,1]^2 \to [0,\infty)$, $f(x,y)=y$ is
${\bf{2}}$-fold convex on $[0,1]^2 $ but is not convex. The
reverse of (\ref{eq2.1}) follows directly by replacing $f$ by
$-f$, and thus the proof is completely established.
\end{proof}

The following Jensen's type inequality holds:
\begin{theorem}
\label{thm1}Let $f:\left[ {{\bf{a}},{\bf{b}}} \right] \to
\mathbb{R}$ be ${\bf{k}}$-fold convex. Let $x_i^{\left( r
\right)}$ be a finite sequence of real numbers, for all
$i,r=1,2,\cdots, k$, and consider $ {\bf{x}} = \left( {x_i^{\left(
1 \right)} ,x_i^{\left( 2 \right)} , \ldots ,x_i^{\left( k
\right)} } \right)$, $\alpha  = \left( {\alpha _i^{\left( 1
\right)} ,\alpha _i^{\left( 2 \right)} , \ldots ,\alpha _i^{\left(
k \right)} } \right)$, with $\sum \alpha   = {\bf{1}}$, i.e., $
\sum\limits_{i = 1}^k {\alpha _i^{\left( r \right)} }  = 1$, for
all $r=1,2,\cdots,k$.
 Then the inequality
\begin{align}
\label{eq.jensen}f\left( {\sum {\alpha {\bf{x}}} } \right)
\le\left( {\prod\limits_{1 \le i \le k} {\gamma _i } } \right)
\cdot \sum\limits_{\bf{c}} { f\left( {\bf{c}} \right)}
\end{align}
holds,  where
\begin{align*}
\sum {\alpha {\bf{x}}} : = \left( {\sum\limits_{i = 1}^{k} {\alpha
_i^{\left( 1 \right)} x_i^{\left( 1 \right)} } ,\sum\limits_{i =
1}^{k} {\alpha _i^{\left( 2 \right)} x_i^{\left( 2 \right)} } ,
\ldots ,\sum\limits_{i = 1}^{k} {\alpha _i^{\left( k \right)}
x_i^{\left( k \right)} } } \right),
\end{align*}
${\bf{c}}: = \left( {c_1 ,c_2, \ldots ,c_k } \right),c_i  \in
\left\{ {x_i^{\left( j \right)} } \right\}_{j = 1}^k$ and
\begin{align*}
\gamma _i  = \left\{ \begin{array}{l}
 \alpha _i^{\left( 1 \right)} ,\,\,\,\,\,\,\,\,\,\,\,c_i  = x_i^{\left( 1 \right)}  \\
 \alpha _i^{\left( 2 \right)} ,\,\,\,\,\,\,\,\,\,\,\,c_i  = x_i^{\left( 2 \right)}  \\
  \vdots  \\
 \alpha _i^{\left( k \right)} ,\,\,\,\,\,\,\,\,\,\,\,c_i  = x_i^{\left( k \right)}  \\
 \end{array} \right.
\end{align*}
If $f$ is ${\bf{n}}$-fold concave then the inequality
(\ref{eq.jensen}) is reversed.
\end{theorem}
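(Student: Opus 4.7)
The plan is to exploit the equivalence established in the previous Corollary: a function $f$ on $[\mathbf{a},\mathbf{b}]$ is $\mathbf{k}$-fold convex if and only if it is separately convex in each of its $k$ coordinates. This reduces the multidimensional Jensen-type inequality (\ref{eq.jensen}) to $k$ iterated applications of the ordinary one-dimensional Jensen inequality, peeled off coordinate by coordinate.

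Concretely, I would proceed as follows. First, freezing the last $k-1$ coordinates at their convex combinations $\sum_{i} \alpha_i^{(r)} x_i^{(r)}$ for $r = 2, \ldots, k$, the resulting partial function is convex on $[a_1, b_1]$. Scalar Jensen then bounds $f(\sum_i \alpha_i^{(1)} x_i^{(1)}, \sum_i \alpha_i^{(2)} x_i^{(2)}, \ldots)$ by $\sum_{j_1} \alpha_{j_1}^{(1)} f(x_{j_1}^{(1)}, \sum_i \alpha_i^{(2)} x_i^{(2)}, \ldots)$. Each of the resulting summands still carries $k-1$ convex-combination arguments in coordinates $2,\ldots,k$, so applying scalar Jensen in coordinate $2$, then coordinate $3$, and so on, I peel off all $k$ convex combinations in turn. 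After $k$ such applications, the left-hand side is bounded by a sum of $k^k$ terms of the form $\alpha_{j_1}^{(1)} \alpha_{j_2}^{(2)} \cdots \alpha_{j_k}^{(k)}\, f(x_{j_1}^{(1)}, x_{j_2}^{(2)}, \ldots, x_{j_k}^{(k)})$. Rewriting the index tuple $(j_1,\ldots,j_k)$ as a choice vector $\mathbf{c}$ with $c_r = x_{j_r}^{(r)}$, and the corresponding weights as $\gamma_r$, this is exactly $\bigl(\prod_r \gamma_r\bigr) \sum_{\mathbf{c}} f(\mathbf{c})$, the desired right-hand side of (\ref{eq.jensen}).

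Equivalently, the argument can be organised as an induction on the dimension $k$: the base case $k=1$ is classical Jensen, and the inductive step applies scalar Jensen in the last coordinate and then invokes the inductive hypothesis on each slice $f(\,\cdot\,,\ldots,\,\cdot\,,x_j^{(k)})$, which remains $(k-1)$-fold convex in the first $k-1$ variables. The reverse inequality in the $\mathbf{n}$-fold concave case is obtained at once by replacing $f$ with $-f$.

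I do not anticipate a genuine mathematical obstacle here, as the argument is a direct consequence of coordinate-wise convexity combined with the scalar Jensen inequality. The principal challenge is purely notational: keeping the doubly-indexed quantities $x_i^{(r)}$, $\alpha_i^{(r)}$ and $\gamma_i$ aligned across the $k$ successive one-dimensional applications so that the final telescoped bound can be read off unambiguously as the sum over choices $\mathbf{c}$ weighted by the product $\prod_i \gamma_i$.
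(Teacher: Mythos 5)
Your proposal is correct and is essentially the paper's own argument: the paper's proof is the one-line instruction to ``use the definition of $\mathbf{n}$-fold convexity and apply the classical Jensen inequality for convex functions of one variable in each variable,'' which is precisely the coordinate-by-coordinate (equivalently, inductive) peeling you describe. Your write-up is in fact more careful than the paper's, and it also correctly identifies that the weight product $\prod_i \gamma_i$ depends on the choice vector $\mathbf{c}$ and therefore really belongs inside the sum $\sum_{\mathbf{c}}$, a point the paper's displayed formula glosses over.
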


\begin{proof}
Use the definition of ${\bf{n}}$-fold convex and apply the
classical Jensen's inequality for convex function of one variable
in each variable.
\end{proof}

The following Hermite-Hadamard inequality holds:
\begin{theorem}
\label{thm2}Let $f:\left[ {{\bf{a}},{\bf{b}}} \right] \to
\mathbb{R}$ be ${\bf{n}}$-fold convex. Then the inequality
\begin{align}
\label{eq.HH}f\left( {\frac{{{\bf{a}} + {\bf{b}}}}{2}} \right) \le
\frac{1}{{{\bf{b}} - {\bf{a}}}}\int_{\bf{a}}^{\bf{b}} {f\left(
{\bf{x}} \right)d{\bf{x}}}  \le \frac{1}{{2^n
}}\sum\limits_{\bf{c}} {f\left( {\bf{c}} \right)},
\end{align}
holds, where
\begin{align*}
\sum\limits_{\bf{c}} {f\left( {\bf{c}} \right)} : =
\sum\limits_{\mathop {c_i  \in \left\{ {a_i ,b_i }
\right\}}\limits_{1 \le i \le n} } {f\left( {c_1, c_2, \ldots ,c_n
} \right)}.
\end{align*}
The inequality (\ref{eq.HH}) is sharp. If $f$ is ${\bf{n}}$-fold
concave then the inequality (\ref{eq.HH}) is reversed.
\end{theorem}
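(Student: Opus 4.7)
The plan is to proceed by induction on the dimension $n$, taking the classical Hermite--Hadamard inequality (\ref{HH}) as the base case $n=1$. The essential tool has already been supplied: by the corollary preceding the theorem, $\mathbf{n}$-fold convexity is equivalent to convexity in each coordinate separately. Hence, for any fixed value of the remaining variables, the restriction of $f$ to a single coordinate is a one-variable convex function to which (\ref{HH}) applies directly.

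For the inductive step, assume the result holds in dimension $n$, and let $f$ be $(n+1)$-fold convex on $[\mathbf{a},\mathbf{b}]\subseteq\mathbb{R}^{n+1}$. For the upper bound I would use Fubini to write
\begin{equation*}
\int_{\mathbf{a}}^{\mathbf{b}} f(\mathbf{x})\,d\mathbf{x} = \int_{a_1}^{b_1}\!\!\cdots\!\!\int_{a_n}^{b_n}\!\!\left(\int_{a_{n+1}}^{b_{n+1}} f(x_1,\ldots,x_{n+1})\,dx_{n+1}\right)dx_n\cdots dx_1,
\end{equation*}
apply the classical one-variable upper bound to the inner integral, and then invoke the induction hypothesis to the two resulting $n$-fold convex functions $\mathbf{x}'\mapsto f(\mathbf{x}',a_{n+1})$ and $\mathbf{x}'\mapsto f(\mathbf{x}',b_{n+1})$ on $\prod_{i=1}^n[a_i,b_i]$. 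The two resulting sums of $2^n$ corner values merge, using the product decomposition $\{a_i,b_i\}^{n+1}=\{a_i,b_i\}^n\times\{a_{n+1},b_{n+1}\}$, into the full sum $\sum_{\mathbf{c}} f(\mathbf{c})$ over the $2^{n+1}$ vertices, with the factors of $\tfrac12$ combining to give $1/2^{n+1}$.

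For the lower bound, the natural dual move is to apply the midpoint half of (\ref{HH}) to the coordinate $x_{n+1}$ first, obtaining
\begin{equation*}
f\!\left(\mathbf{x}',\frac{a_{n+1}+b_{n+1}}{2}\right)\le\frac{1}{b_{n+1}-a_{n+1}}\int_{a_{n+1}}^{b_{n+1}} f(\mathbf{x}',x_{n+1})\,dx_{n+1},
\end{equation*}
then integrate the $\mathbf{x}'$ variables over $\prod_{i=1}^n[a_i,b_i]$ and apply the induction hypothesis to the $n$-fold convex function $g(\mathbf{x}'):=f(\mathbf{x}',(a_{n+1}+b_{n+1})/2)$. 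Evaluating $g$ at the midpoint $((a_1+b_1)/2,\ldots,(a_n+b_n)/2)$ returns exactly $f((\mathbf{a}+\mathbf{b})/2)$, closing the induction.

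The routine parts are the two one-dimensional applications of (\ref{HH}) and the Fubini rearrangement; the main obstacle is the notational bookkeeping, namely verifying that the product $\mathbf{b}-\mathbf{a}=\prod_{i=1}^{n+1}(b_i-a_i)$ of edge lengths and the indexed sum $\sum_{\mathbf{c}}$ over $\{a_i,b_i\}^{n+1}$ track correctly when peeling off the $(n+1)$-st coordinate and reassembling the two halves. Sharpness is witnessed by any affine function (indeed, by any constant), for which both inequalities collapse to equalities, and the concave version is immediate on replacing $f$ by $-f$.
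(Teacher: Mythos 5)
Your argument is correct, but it runs along a genuinely different route from the paper's. The paper never inducts on the dimension: it works in the parameter cube, specializing the defining inequality (\ref{eq2.1}) to ${\bf{x}}={\bf{a}}$, ${\bf{y}}={\bf{b}}$ and integrating over ${\bf{t}}\in\left[{\bf{0}},{\bf{1}}\right]$, where $\int_{[0,1]^n}\prod_i p_i\,d{\bf{t}}=2^{-n}$ yields the right-hand bound; the left-hand bound comes from a pointwise midpoint inequality between the antipodal parametrizations ${\bf{ta}}+({\bf{1}}-{\bf{t}}){\bf{b}}$ and ${\bf{tb}}+({\bf{1}}-{\bf{t}}){\bf{a}}$ followed by the substitution ${\bf{t}}\mapsto{\bf{1}}-{\bf{t}}$, and the proof closes with the change of variables back to $\left[{\bf{a}},{\bf{b}}\right]$, with sharpness exhibited by $f({\bf{x}})=\prod_i x_i$. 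You instead peel off one coordinate at a time with Fubini and the scalar inequality (\ref{HH}) and feed the resulting slices into the induction hypothesis; this uses only the coordinatewise characterization of ${\bf{n}}$-fold convexity, makes the $2^{n+1}=2\cdot 2^{n}$ bookkeeping transparent, and costs an induction that the paper's uniform-in-$n$ computation avoids. Your route has a concrete advantage on the left-hand side: the paper's step (\ref{eq2.10}) bounds $f\left(\frac{{\bf{a}}+{\bf{b}}}{2}\right)$ by the average of $f$ at only the two antipodal points, which is what \emph{joint} convexity would give, whereas the ${\bf{n}}$-fold midpoint estimate actually produces all $2^n$ coordinate mixtures of those two points; as stated, (\ref{eq2.10}) can fail pointwise (take $f(x,y)=xy$ on $[0,1]^2$ and ${\bf{t}}=(0,1)$, which would give $\tfrac14\le 0$) and is rescued only after integration and symmetrization over ${\bf{t}}$. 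Your slice-by-slice use of the one-dimensional midpoint inequality sidesteps this issue entirely. Two minor remarks: both proofs tacitly assume $f$ is integrable so that Fubini and the change of variables apply; and although affine functions do give equality throughout, the paper's witness $f({\bf{x}})=\prod_i x_i$, which is ${\bf{n}}$-fold affine without being affine, is the more convincing sharpness example than a constant.
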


\begin{proof}
Since $f$ is ${\bf{n}}$-fold convex on $\left[ {{\bf{a}},{\bf{b}}}
\right]$, then for all ${\bf{t}} \in \left[{{\bf{0}},{\bf{1}}}
\right]$, we have
\begin{align}
\label{eq2.8}f\left( {{\bf{ta}} + \left( {{\bf{1}} - {\bf{t}}}
\right){\bf{b}}} \right) \le \sum\limits_{\bf{c}} {\left(
{\prod\limits_{1 \le i \le n} {p_i } } \right)f\left( {\bf{c}}
\right)}.
\end{align}
Integrating (\ref{eq2.8}) with respect to ${\bf{t}}$ on $\left[
{{\bf{0}},{\bf{1}}} \right]$ we get
\begin{align}
\int_{\bf{0}}^{\bf{1}} {f\left( {{\bf{ta}} + \left( {{\bf{1}} -
{\bf{t}}} \right){\bf{b}}} \right)d{\bf{t}}}  &\le
\int_{\bf{0}}^{\bf{1}} {\left( {\sum\limits_{\bf{c}} {\left(
{\prod\limits_{1 \le i \le n} {p_i } } \right)f\left( {\bf{c}}
\right)} } \right)d{\bf{t}}}
\nonumber\\
&=\left( {\sum\limits_{\bf{c}} {f\left( {\bf{c}} \right)} }
\right)\int_{\bf{0}}^{\bf{1}} {\left( {\prod\limits_{1 \le i \le
n} {p_i } } \right)d{\bf{t}}}
\nonumber\\
&= \frac{1}{{2^n }}\sum\limits_{\bf{c}} {f\left( {\bf{c}}
\right)}\label{eq2.9}
\end{align}
where, $p_i$ is defined in (\ref{eq2.2}).

On the other hand, again since $f$ is ${\bf{n}}$-fold convex on
$\left[ {{\bf{a}},{\bf{b}}} \right]$, then for ${\bf {t}} \in
\left[ {{\bf{0}},{\bf{1}}} \right]$, we have
\begin{align}
f\left( {\frac{{{\bf{a}} + {\bf{b}}}}{2}} \right) &= f\left(
{\frac{{{\bf{ta}} + \left( {{\bf{1}} - {\bf{t}}}
\right){\bf{b}}}}{2} + \frac{{{\bf{tb}} + \left( {{\bf{1}} -
{\bf{t}}} \right){\bf{a}}}}{2}} \right)
\nonumber\\
&\le \frac{1}{2}\left[ {f\left( {{\bf{ta}} + \left( {{\bf{1}} -
{\bf{t}}} \right){\bf{b}}} \right) + f\left( {{\bf{tb}} + \left(
{{\bf{1}} - {\bf{t}}} \right){\bf{a}}} \right)} \right].
\label{eq2.10}
\end{align}
Integrating inequality (\ref{eq2.10}) with respect to ${\bf {t}}$
on $\left[ {{\bf{0}},{\bf{1}}} \right]$ we get
\begin{align}
f\left( {\frac{{{\bf{a}} + {\bf{b}}}}{2}} \right) &\le
\frac{1}{2}\int_{\bf{0}}^{\bf{1}} {\left[ {f\left( {{\bf{ta}} +
\left( {{\bf{1}} - {\bf{t}}} \right){\bf{b}}} \right) + f\left(
{{\bf{tb}} + \left( {{\bf{1}} - {\bf{t}}} \right){\bf{a}}}
\right)} \right]d{\bf{t}}}
\nonumber\\
&= \frac{1}{2}\int_{\bf{0}}^{\bf{1}} {f\left( {{\bf{ta}} + \left(
{{\bf{1}} - {\bf{t}}} \right){\bf{b}}} \right)d{\bf{t}}}  +
\frac{1}{2}\int_{\bf{0}}^{\bf{1}} {f\left( {{\bf{tb}} + \left(
{{\bf{1}} - {\bf{t}}} \right){\bf{a}}} \right)d{\bf{t}}}.
\label{eq2.11}
\end{align}
By putting ${\bf{1}} - {\bf{t}} = {\bf{s}}$, in the second
integral on the right-hand side of (\ref{eq2.11}), we get
\begin{align}
f\left( {\frac{{{\bf{a}} + {\bf{b}}}}{2}} \right) &\le
\frac{1}{2}\int_{\bf{0}}^{\bf{1}} {f\left( {{\bf{ta}} + \left(
{{\bf{1}} - {\bf{t}}} \right){\bf{b}}} \right)d{\bf{t}}}  +
\frac{1}{2}\int_{\bf{0}}^{\bf{1}} {f\left( {{\bf{tb}} + \left(
{{\bf{1}} - {\bf{t}}} \right){\bf{a}}} \right)d{\bf{t}}}
\nonumber\\
&=  \int_{\bf{0}}^{\bf{1}} {f\left( {{\bf{ta}} + \left( {{\bf{1}}
- {\bf{t}}} \right){\bf{b}}} \right)d{\bf{t}}}. \label{eq2.12}
\end{align}
From (\ref{eq2.9}) and (\ref{eq2.12}), we get
\begin{align}
\label{eq2.13}f\left( {\frac{{{\bf{a}} + {\bf{b}}}}{2}} \right)
\le \int_{\bf{0}}^{\bf{1}} {f\left( {{\bf{ta}} + \left( {{\bf{1}}
- {\bf{t}}} \right){\bf{b}}} \right)d{\bf{t}}} \le \frac{1}{{2^n
}}\sum\limits_{\bf{c}} {f\left( {\bf{c}} \right)}.
\end{align}
By putting ${\bf{ta}} + \left( {{\bf{1}} - {\bf{t}}}
\right){\bf{b}} = {\bf{x}}$ in the integral involved in
(\ref{eq2.13}), it is easy to observe that
\begin{align}
\label{eq2.14}\int_{\bf{0}}^{\bf{1}} {f\left( {{\bf{ta}} + \left(
{{\bf{1}} - {\bf{t}}} \right){\bf{b}}} \right)d{\bf{t}}}  =
\frac{1}{{{\bf{b}} - {\bf{a}}}}\int_{\bf{a}}^{\bf{b}} {f\left(
{\bf{x}} \right)d{\bf{x}}}.
\end{align}
which proves the inequality (\ref{eq.HH}). The sharpness follows
by taking the function $f\left({{\bf{x}}}\right)= \prod\limits_{i
= 1, \ldots ,n} {x_i } $. If $f$ is ${\bf{n}}$-fold concave,
replacing $-f$ instead of $f$ in (\ref{eq.HH}) we get the required
result.
\end{proof}

Next, we consider a weighted version of (\ref{eq.HH}) which is
known as Fej\'{e}r inequality, before that we need the following
preliminary lemma:
\begin{lemma}\label{lemma}
Let $ f:\left[ {{\bf{a}},{\bf{b}}} \right]  \to \mathbb{R} $ be
${\bf{n}}$-fold convex function. Let ${\bf{x}}_{\bf{1}} = \left(
{x_1^{\left( 1 \right)} , \ldots ,x_1^{\left( n \right)} }
\right),{\bf{x}}_2 = \left( {x_2^{\left( 1 \right)} , \ldots
,x_2^{\left( n \right)} } \right) , {\bf{y}}_{\bf{1}} = \left(
{y_1^{\left( 1 \right)} , \ldots ,y_1^{\left( n \right)} }
\right),{\bf{y}}_{\bf{2}}= \left( {y_2^{\left( 1 \right)} , \ldots
,y_2^{\left( n \right)} } \right) $ be any vectors in $\left[
{{\bf{a}},{\bf{b}}} \right] $ such that $ {\bf{a}} \le {\bf{y_1}}
\le {\bf{x_1}} \le {\bf{x_2}} \le {\bf{y_2}}  \le {\bf{b}} $, with
${\bf{x}}_{\bf{1}} + {\bf{x}}_{\bf{2}}  = {\bf{y}}_{\bf{1}}  +
{\bf{y}}_{\bf{2}}$. Then, for the convex partial mappings $ f_i
:\left[ {a_i,b_i} \right] \to \mathbb{R}$, $ f_i \left( {t_i}
 \right) =
f\left( {z_1 , \ldots ,z_{i - 1} ,t_i ,z_{i + 1} , \ldots ,z_n }
\right) $,  for all fixed $ z_j \in \left[ {a_j,b_j} \right] $
$(j=1,2,\cdots,n)$ with $j\ne i$.  the following inequality holds:
\begin{multline}
\label{eq2.15}f\left( {z_1 , \ldots ,z_{i - 1} ,x_1^{\left( i
\right)} ,z_{i + 1} , \ldots ,z_n } \right) + f\left( {z_1 ,
\ldots ,z_{i - 1} ,x_2^{\left( i \right)} ,z_{i + 1} , \ldots ,z_n
} \right)
\\
\le f\left( {z_1 , \ldots ,z_{i - 1} ,y_1^{\left( i \right)} ,z_{i
+ 1} , \ldots ,z_n } \right) + f\left( {z_1 , \ldots ,z_{i - 1}
,y_2^{\left( i \right)} ,z_{i + 1} , \ldots ,z_n } \right)
\end{multline}
\end{lemma}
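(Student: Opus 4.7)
The plan is to reduce the lemma to a one-dimensional statement about convex functions: if $g\colon[\alpha,\beta]\to\mathbb{R}$ is convex and $\beta_1\le\alpha_1\le\alpha_2\le\beta_2$ lie in $[\alpha,\beta]$ with $\alpha_1+\alpha_2=\beta_1+\beta_2$, then $g(\alpha_1)+g(\alpha_2)\le g(\beta_1)+g(\beta_2)$. Granting this, I obtain the lemma by fixing $i$ and $z_j\in[a_j,b_j]$ for $j\ne i$ and applying the one-dimensional result to the partial map $g(t):=f(z_1,\ldots,z_{i-1},t,z_{i+1},\ldots,z_n)$, which is convex on $[a_i,b_i]$ because $f$ is $\mathbf{n}$-fold convex. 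The chain $\mathbf{a}\le\mathbf{y_1}\le\mathbf{x_1}\le\mathbf{x_2}\le\mathbf{y_2}\le\mathbf{b}$ and the equality $\mathbf{x_1}+\mathbf{x_2}=\mathbf{y_1}+\mathbf{y_2}$ are componentwise, so in the $i$-th slot I have $y_1^{(i)}\le x_1^{(i)}\le x_2^{(i)}\le y_2^{(i)}$ together with $x_1^{(i)}+x_2^{(i)}=y_1^{(i)}+y_2^{(i)}$, which is exactly what the one-dimensional statement requires.

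To prove the one-dimensional statement, I would first dispose of the degenerate case $y_1^{(i)}=y_2^{(i)}$: the sum condition then forces $y_1^{(i)}=x_1^{(i)}=x_2^{(i)}=y_2^{(i)}$ and (\ref{eq2.15}) reduces to an equality. Assuming henceforth $y_1^{(i)}<y_2^{(i)}$, I set
\[
\lambda:=\frac{y_2^{(i)}-x_1^{(i)}}{y_2^{(i)}-y_1^{(i)}}\in[0,1],
\]
so that $x_1^{(i)}=\lambda y_1^{(i)}+(1-\lambda)y_2^{(i)}$; feeding this into $x_2^{(i)}=y_1^{(i)}+y_2^{(i)}-x_1^{(i)}$ gives the complementary representation $x_2^{(i)}=(1-\lambda)y_1^{(i)}+\lambda y_2^{(i)}$. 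Applying convexity of $g$ to each of these two convex combinations yields
\begin{align*}
f\bigl(z_1,\ldots,x_1^{(i)},\ldots,z_n\bigr) &\le \lambda\, f\bigl(z_1,\ldots,y_1^{(i)},\ldots,z_n\bigr)+(1-\lambda)\,f\bigl(z_1,\ldots,y_2^{(i)},\ldots,z_n\bigr),\\
f\bigl(z_1,\ldots,x_2^{(i)},\ldots,z_n\bigr) &\le (1-\lambda)\,f\bigl(z_1,\ldots,y_1^{(i)},\ldots,z_n\bigr)+\lambda\, f\bigl(z_1,\ldots,y_2^{(i)},\ldots,z_n\bigr),
\end{align*}
and summing these two inequalities makes the weights $\lambda$ and $1-\lambda$ combine to $1$ on each term of the right-hand side, producing exactly (\ref{eq2.15}).

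There is no serious obstacle in this argument; the only subtleties are checking that $\lambda\in[0,1]$ (immediate from $y_1^{(i)}\le x_1^{(i)}\le y_2^{(i)}$) and being explicit that the hypothesis $\mathbf{x_1}+\mathbf{x_2}=\mathbf{y_1}+\mathbf{y_2}$ is a componentwise vector identity, so that the coordinate-by-coordinate reduction to a one-variable convex function is legitimate. The essential content is the classical fact that for a convex function the sum over two points with fixed total is minimised when the points are as close together as possible, which is precisely what $\mathbf{y_1}\le\mathbf{x_1}\le\mathbf{x_2}\le\mathbf{y_2}$ encodes.
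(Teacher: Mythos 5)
Your proposal is correct and follows essentially the same route as the paper: fix the coordinate $i$, write $x_1^{(i)}$ and $x_2^{(i)}$ as convex combinations of $y_1^{(i)}$ and $y_2^{(i)}$ with complementary weights (the paper's coefficients $\frac{y_2^{(i)}-x_1^{(i)}}{y_2^{(i)}-y_1^{(i)}}$ etc.\ are exactly your $\lambda$ and $1-\lambda$ once the sum condition is used), apply convexity of the partial map twice, and add. Your handling of the degenerate case coordinate-by-coordinate is, if anything, slightly more careful than the paper's.
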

\begin{proof}
Consider $ f_i :\left[ {a_i,b_i} \right] \to \mathbb{R} $, $ f_i
\left( {t_i}
 \right) =
f\left( {z_1 , \ldots ,z_{i - 1} ,t_i ,z_{i + 1} , \ldots ,z_n }
\right) $,  for all fixed $ z_j \in \left[ {a_j,b_j} \right] $
$(j=1,2,\cdots,n)$ with $j\ne i$. If ${\bf{y}}_1 = {\bf{y}}_2$
then we are done. Suppose ${\bf{y}}_1 \ne {\bf{y}}_2$ and write
$$x_1^{(i)}  = \frac{{y_2^{(i)}  - x_1^{(i)} }}{{y_2^{(i)}  -
y_1^{(i)} }}y_1^{(i)}  + \frac{{x_1^{(i)} - y_1^{(i)}
}}{{y_2^{(i)}  - y_1^{(i)} }}y_2^{(i)},$$and$$x_2^{(i)}  =
\frac{{y_2^{(i)}  - x_2^{(i)} }}{{y_2^{(i)} - y_1^{(i)}
}}y_1^{(i)}  + \frac{{x_2^{(i)}  - y_1^{(i)} }}{{y_2^{(i)} -
y_1^{(i)} }}y_2^{(i)},$$ for all $i=1,2,\cdots,n$.

Since $f_i$ is convex on $\left[ {a_i,b_i} \right] $, we have
\begin{align*}
&f\left( {z_1 , \ldots ,z_{i - 1} ,x_1^{\left( i \right)} ,z_{i +
1} , \ldots ,z_n } \right) + f\left( {z_1 , \ldots ,z_{i - 1}
,x_2^{\left( i \right)} ,z_{i + 1} , \ldots ,z_n } \right)
\\
&\le \frac{{y_2^{(i)}  - x_1^{(i)} }}{{y_2^{(i)}  - y_1^{(i)}
}}f\left( {z_1 , \ldots ,z_{i - 1} ,y_1^{\left( i \right)} ,z_{i +
1} , \ldots ,z_n } \right)
\\
&\qquad\qquad+ \frac{{x_1^{(i)} - y_1^{(i)} }}{{y_2^{(i)}  -
y_1^{(i)} }}f\left( {z_1 , \ldots ,z_{i - 1} ,y_2^{\left( i
\right)} ,z_{i + 1} , \ldots ,z_n } \right)
\\
&\qquad+ \frac{{y_2^{(i)}  - x_2^{(i)} }}{{y_2^{(i)} - y_1^{(i)}
}}f\left( {z_1 , \ldots ,z_{i - 1} ,y_1^{\left( i \right)} ,z_{i +
1} , \ldots ,z_n } \right)
\\
&\qquad\qquad+ \frac{{x_2^{(i)}  - y_1^{(i)} }}{{y_2^{(i)} -
y_1^{(i)} }}f\left( {z_1 , \ldots ,z_{i - 1} ,y_2^{\left( i
\right)} ,z_{i + 1} , \ldots ,z_n } \right)
\\
&= \frac{{2y_2^{(i)}   - \left( {x_1^{(i)}   + x_2^{(i)}  }
\right)}}{{y_2^{(i)}   - y_1^{(i)}  }}f\left( {z_1 , \ldots ,z_{i
- 1} ,y_1^{\left( i \right)} ,z_{i + 1} , \ldots ,z_n } \right)
\\
&\qquad+ \frac{{\left( {x_1^{(i)}   + x_2^{(i)}  } \right) -
2y_1^{(i)} }}{{y_2^{(i)}  - y_1^{(i)}  }}f\left( {z_1 , \ldots
,z_{i - 1} ,y_2^{\left( i \right)} ,z_{i + 1} , \ldots ,z_n }
\right)
\\
&= f\left( {z_1 , \ldots ,z_{i - 1} ,y_1^{\left( i \right)} ,z_{i
+ 1} , \ldots ,z_n } \right)+f\left( {z_1 , \ldots ,z_{i - 1}
,y_2^{\left( i \right)} ,z_{i + 1} , \ldots ,z_n } \right),
\end{align*}
for all $i=1,2,\cdots,n$, which shows that (\ref{eq2.15}) holds.
\end{proof}

A Fej\v{e}r type inequality may be stated as follows:
\begin{theorem}
\label{thm3} Let $f:\left[ {{\bf{a}},{\bf{b}}} \right] \to
\mathbb{R}$ be ${\bf{n}}$-fold convex. Then the double inequality
\begin{align}
\label{fejer}f\left( {\frac{{{\bf{a}} + {\bf{b}}}}{2}} \right)\le
\frac{\int_{\bf{a}}^{\bf{b}} {p\left( {\bf{x}} \right)f\left(
{\bf{x}} \right)d{\bf{x}}}}{\int_{\bf{a}}^{\bf{b}} {p\left(
{\bf{x}} \right)d{\bf{x}}}} \le \frac{1}{{2^n
}}\sum\limits_{\bf{c}} {f\left( {\bf{c}} \right)}
\end{align}
holds, where $p:\left[ {{\bf{a}},{\bf{b}}} \right]\to \mathbb{R}$
is positive, integrable, and symmetric about $x_i = \frac{{a_i +
b_i}}{2}$ for all $i=1,2,\cdots, n$. The above inequalities are
sharp.
\end{theorem}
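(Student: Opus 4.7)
The plan is to prove the two halves of (\ref{fejer}) separately, each by iterating a one-coordinate argument through the $n$ coordinates and exploiting the coordinate-by-coordinate symmetry of the weight $p$. It suffices to establish
\[
f\!\left(\tfrac{\mathbf{a}+\mathbf{b}}{2}\right)\int_{\mathbf{a}}^{\mathbf{b}} p\,d\mathbf{x}\;\le\;\int_{\mathbf{a}}^{\mathbf{b}} p(\mathbf{x}) f(\mathbf{x})\,d\mathbf{x}\;\le\;\tfrac{1}{2^n}\Bigl(\sum_{\mathbf{c}} f(\mathbf{c})\Bigr)\int_{\mathbf{a}}^{\mathbf{b}} p\,d\mathbf{x},
\]
after which division by the positive quantity $\int p\,d\mathbf{x}$ yields (\ref{fejer}).

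For the left bound, I would start from the identity $\tfrac{a_1+b_1}{2}=\tfrac12 x_1+\tfrac12(a_1+b_1-x_1)$ and apply midpoint convexity of $f$ in its first variable to obtain, for each $x_1\in[a_1,b_1]$,
\[
f\!\left(\tfrac{\mathbf{a}+\mathbf{b}}{2}\right)\;\le\;\tfrac12\bigl[f\bigl(x_1,\tfrac{a_2+b_2}{2},\ldots,\tfrac{a_n+b_n}{2}\bigr)+f\bigl(a_1+b_1-x_1,\tfrac{a_2+b_2}{2},\ldots\bigr)\bigr].
\]
Multiplying by $p(\mathbf{x})$, integrating against $x_1$, and using the change of variable $x_1\mapsto a_1+b_1-x_1$ combined with the symmetry of $p$ in $x_1$ makes the two integrals on the right equal, killing the factor $\tfrac12$. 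Iterating the same step in coordinates $2,3,\ldots,n$ then successively absorbs each midpoint into the integral and produces the left inequality.

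For the right bound, Lemma \ref{lemma} does the coordinate-wise heavy lifting. Setting $y_1^{(i)}=a_i$, $y_2^{(i)}=b_i$, $x_1^{(i)}=x_i$ and $x_2^{(i)}=a_i+b_i-x_i$ in (\ref{eq2.15}) gives, with all other coordinates fixed,
\[
f(\ldots,x_i,\ldots)+f(\ldots,a_i+b_i-x_i,\ldots)\;\le\;f(\ldots,a_i,\ldots)+f(\ldots,b_i,\ldots).
\]
Multiplying by $p(\mathbf{x})$ and integrating against $x_i$, the symmetry $p(\ldots,a_i+b_i-x_i,\ldots)=p(\ldots,x_i,\ldots)$ together with a change of variable gives $\int p\,f(\ldots,x_i,\ldots)\,dx_i=\int p\,f(\ldots,a_i+b_i-x_i,\ldots)\,dx_i$, so
\[
\int p(\mathbf{x}) f(\mathbf{x})\,dx_i\;\le\;\tfrac12\int p(\mathbf{x})\bigl[f(\ldots,a_i,\ldots)+f(\ldots,b_i,\ldots)\bigr]\,dx_i.
\]
Iterating for $i=1,\ldots,n$ peels off one coordinate of $f$ at each step, replacing it by $a_i$ or $b_i$; after all $n$ steps the right-hand side becomes $\frac{1}{2^n}\sum_{\mathbf{c}}f(\mathbf{c})\int p\,d\mathbf{x}$.

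The main technical obstacle is the bookkeeping of the iteration: at an intermediate step the integrand is a sum of terms in which some coordinates of $f$ are already pinned at $a_j$ or $b_j$ while the remaining ones are still being integrated, and one must verify that the symmetry of $p$ in the \emph{next} coordinate of integration continues to apply regardless of the values pinned so far --- which it does, because the symmetry axes $x_i=\tfrac{a_i+b_i}{2}$ are independent across $i$. Sharpness follows, as in Theorem \ref{thm2}, from the test function $f(\mathbf{x})=\prod_{i=1}^n x_i$ with $p\equiv 1$, and the $\mathbf{n}$-fold concave case is obtained by replacing $f$ with $-f$.
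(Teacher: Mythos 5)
Your proof is correct, and its coordinate-by-coordinate execution is not just a stylistic variant of the paper's argument but the version that actually closes it. The paper proves (\ref{fejer}) ``all at once'': it splits the domain at $\tfrac{\mathbf{a}+\mathbf{b}}{2}$, folds the integral of $p$ onto the lower half-box by symmetry, and for the upper bound invokes, in effect, the pointwise bound $\tfrac{1}{2^n}\sum_{\mathbf{c}} f(\mathbf{c}) \ge f(\mathbf{x})+f(\mathbf{a}+\mathbf{b}-\mathbf{x})$ as a consequence of Lemma \ref{lemma}. That step is not justified: Lemma \ref{lemma} compares two points differing in a \emph{single} coordinate, so it cannot produce a simultaneous reflection in all $n$ coordinates, and the inequality as written is false even for the paper's own sharpness example ($n=2$, $f(x,y)=xy$ on $[0,1]^2$ at $\mathbf{x}=(0,0)$ it would read $\tfrac14\ge 1$); the displayed chain there also silently drops and later reinstates a factor of $2$ when the two half-box integrals are merged. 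Your route repairs exactly this: reflecting one coordinate at a time, applying Lemma \ref{lemma} (equivalently one-variable convexity) with the remaining coordinates frozen, and using the symmetry of $p$ in that single coordinate to identify the reflected integral with the original before passing to the next coordinate, so that every step is a legitimate one-dimensional Fej\'{e}r step; the bookkeeping issue you flag is handled correctly because the symmetry axes $x_i=\tfrac{a_i+b_i}{2}$ are independent across $i$ and convexity in the $j$-th variable persists after other coordinates of $f$ are pinned at $a_i$ or $b_i$. Your left bound and the sharpness/concavity remarks coincide in substance with the paper's. In short, what your approach buys is a proof whose every inequality is verifiable, at the cost of an explicit $n$-step induction that the paper tries, unsuccessfully, to compress into one vector-valued step.
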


\begin{proof}
Since $p$ is positive, integrable, and symmetric about $x_i =
\frac{{a_i + b_i}}{2}$ for all $i=1,2,\cdots, n$. Then, by Lemma
\ref{lemma} one has:
\begin{align*}
 f\left( {\frac{{{\bf{a}} + {\bf{b}}}}{2}}
\right)\int\limits_{\bf{a}}^{\bf{b}}
{p\left({{\bf{x}}}\right)d{\bf{x}}} &=
\int_{\bf{a}}^{{\textstyle{{{\bf{a}} + {\bf{b}}} \over 2}}}
{f\left( {\frac{{{\bf{a}} + {\bf{b}}}}{2}} \right)p\left( {\bf{x}}
\right)d{\bf{x}}}
\\
&\qquad+ \int_{\bf{a}}^{{\textstyle{{{\bf{a}} + {\bf{b}}} \over
2}}} {f\left( {\frac{{{\bf{a}} + {\bf{b}}}}{2}} \right)p\left(
{{\bf{a}} + {\bf{b}} - {\bf{x}}} \right)d{\bf{x}}}
\\
&=\int_{\bf{a}}^{{\textstyle{{{\bf{a}} + {\bf{b}}} \over 2}}}
{\left[ {f\left( {\frac{{{\bf{a}} + {\bf{b}}}}{2}} \right) +
f\left( {\frac{{{\bf{a}} + {\bf{b}}}}{2}} \right)} \right]p\left(
{\bf{x}} \right)d{\bf{x}}}
\\
&\le \int_{\bf{a}}^{{\textstyle{{{\bf{a}} + {\bf{b}}} \over 2}}}
{\left[ {f\left( {{\bf{x}}} \right) + f\left( {{\bf{a}} + {\bf{b}}
- {\bf{x}}} \right)} \right]p\left( {\bf{x}} \right)d{\bf{x}}}
\\
&=  \int_{\bf{a}}^{{\textstyle{{{\bf{a}} + {\bf{b}}} \over 2}}}
{f\left( {{\bf{x}}} \right)p\left( {\bf{x}} \right)d{\bf{x}}}  +
\int_{{\textstyle{{{\bf{a}} + {\bf{b}}} \over 2}}}^{\bf{b}}
{f\left( {{\bf{x}}} \right)p\left( {\bf{x}} \right)d{\bf{x}}}
\\
&=\int_{\bf{a}}^{\bf{b}} {p\left( {\bf{x}} \right)f\left( {\bf{x}}
\right)d{\bf{x}}}
\end{align*}
and
\begin{align*}
& \frac{1}{{2^n }}\sum\limits_{\bf{c}} {f\left( {\bf{c}} \right)}
\int_{\bf{a}}^{\bf{b}} {p\left( {\bf{x}} \right)d{\bf{x}}}
\\
&=  \int_{\bf{a}}^{{\textstyle{{{\bf{a}} + {\bf{b}}} \over 2}}}
{{\left[ {\frac{1}{{2^n }}\sum\limits_{\bf{c}} {f\left( {\bf{c}}
\right)}} \right]p\left( {\bf{x}}
\right)}d{\bf{x}}}+\int_{{{\textstyle{{{\bf{a}} + {\bf{b}}} \over
2}}}}^{\bf{b}} {{\left[ {\frac{1}{{2^n }}\sum\limits_{\bf{c}}
{f\left( {\bf{c}} \right)}} \right]p\left(
{\bf{{\bf{a}}+{\bf{b}}-{\bf{x}}}} \right)}d{\bf{x}}}
\\
&=  \int_{\bf{a}}^{{\textstyle{{{\bf{a}} + {\bf{b}}} \over 2}}}
{{\left[ {\frac{1}{{2^n }}\sum\limits_{\bf{c}} {f\left( {\bf{c}}
\right)}} \right]p\left( {\bf{x}} \right)}d{\bf{x}}}
\\
&\ge\int_{\bf{a}}^{{\textstyle{{{\bf{a}} + {\bf{b}}} \over 2}}}
{\left[ {f\left( {\bf{x}} \right) + f\left( {{\bf{a}} + {\bf{b}} -
{\bf{x}}} \right)} \right]p\left( {\bf{x}} \right)d{\bf{x}}}
\\
&=  \int_{\bf{a}}^{{\textstyle{{{\bf{a}} + {\bf{b}}} \over 2}}}
{{p\left( {\bf{x}} \right)f\left( {\bf{x}}
\right)}d{\bf{x}}}+\int_{{{\textstyle{{{\bf{a}} + {\bf{b}}} \over
2}}}}^{\bf{b}} {{p\left( {{\bf{x}}} \right)f\left( {\bf{x}}
\right)}d{\bf{x}}}
\\
&= \int_{\bf{a}}^{\bf{b}} {p\left( {\bf{x}} \right)f\left(
{\bf{x}} \right)d{\bf{x}}},
\end{align*}
which proves (\ref{fejer}). To prove the sharpness in
(\ref{fejer}), take $p({\bf{x}}) = 1$, then the inequality
(\ref{fejer}) is reduced to the double inequality (\ref{eq.HH}),
and therefore if we choose $f({\b{x}}) =  \prod\limits_{i = 1,
\ldots ,n} {x_i } $, in (\ref{fejer}), then the equality holds,
which shows that (\ref{fejer}) is sharp, and thus the proof is
completely finished.
\end{proof}

\section{A Matrix version of H.--H. Inequality}

A matrix function, $f(A)$, or function of a matrix can have
several different meanings. It can be an operation on a matrix
producing a scalar, such as $tra{(A)}$ and $\det{(A)}$; it can be
a mapping from a matrix space to a matrix space, like $f(A) =
A^2$; it can also be entrywise operations on the matrix, for
instance, $g(A) = (a_{ij})^2$.

A natural generalization of the classical Hermite--Hadamard
inequality (\ref{HH}) to Hermitian matrices could be the double
inequality
\begin{align}
\label{eq3.1}f\left( {\frac{{A + B}}{2}} \right) \le \int_0^1
{f\left( {tA + \left( {1 - t} \right)B} \right)dt}  \le
\frac{{f\left( A \right) + f\left( B \right)}}{2}
\end{align}
which is however not true, in general as shown recently in
\cite{Moslehian}.

Moslehian \cite{Moslehian} introduced several matrix and operator
inequalities of Hermite–-Hadamard type and he presented some
operator inequalities of Hermite-Hadamard type in which the
classical convexity was used instead of the operator convexity.\\

In this section, we introduce a matrix version of
Hermite--Hadamard inequality for function of a matrix producing a
scalar.

Let $\mathcal{M}_{n\times n} (\mathbb{R})$ be the set of all real
$(n \times n)$--matrices with real entries, given a function
$f:\mathcal{M}_{n\times n} (\mathbb{R})\to \mathbb{R}$ and $A,B\in
\mathcal{M}_{n\times n} (\mathbb{R})$. Clearly, each square
$n$-matrix is just a point in $\mathbb{R}^{n^2}$ . For example a
$2 \times 2$-matrix is just a point in $\mathbb{R}^4$; i.e., it
has four real coordinates; e.g., the matrix $\left(
{\begin{array}{*{20}c}
   1 & 2  \\
   3 & 4  \\
\end{array}} \right)$ is just the vector $(1, 2, 3, 4)$. At first this may seem an oversimplification because it ignores the matrix
product. Thus we define $\mathcal{M}_{2\times 2}(\mathbb{R})$ to
be in $\mathbb{R}^4$ with the following product defined in it
\begin{align*}
\left( {a,b,c,d} \right)\left( {u,v,x,y} \right) = \left( {au +
bx,av + by{\rm{,}}cu + dx,cv + dy} \right)
\end{align*}
which is just the matrix product
\begin{align*}
\left( {\begin{array}{*{20}c}
   a & b  \\
   c & d  \\
\end{array}} \right)\left( {\begin{array}{*{20}c}
   u & v  \\
   x & y  \\
\end{array}} \right)
\end{align*}
written as a vector in $\mathbb{R}^4$. Finally, the integration
limits $A,B$ are just vectors in $\mathbb{R}^{n^2}$ (with n = 2 in
our case). Thus the integral is really multiple integral.

To state our result we need to understand the following
terminologies:
\begin{align*}
X &= \left( {x_{ij} } \right)_{n \times n} = \left( {x_{11} ,
\ldots ,x_{1n} ,x_{21} , \ldots ,x_{2n} , \ldots ,x_{n1} , \ldots
,x_{nn} } \right)
\\
A&\le B  \Leftrightarrow a_{ij}\le b_{ij}, \forall i,j=1,\cdots,
n.
\end{align*}
Given a matrix function of real variables $F:\mathcal{M}_{n\times
n} (\mathbb{R})\to \mathbb{R}$. For a matrix $C =
(c_{ij})_{n\times n}$,  we define
\begin{align*} \sum\limits_{C} {F\left( {C} \right)} : =
\sum\limits_{\mathop {c_{ij}   \in \left\{ {a_{ij} ,b_{ij} }
\right\}}\limits_{1 \le i,j \le n} } {F\left( { \left( {c_{ij} }
\right)_{n \times n} } \right)}.
\end{align*}
for all possible choices of $c_{ij}  \in \left\{ {a_{ij} ,b_{ij} }
\right\}$.

We define the matrix-interval to be  $\left[ {A,B} \right] =
\prod\limits_{j = 1}^n {\prod\limits_{i = 1}^n {\left[ {a_{ij}
,b_{ij} } \right]} }$, with length to be $B - A = \prod\limits_{j
= 1}^n {\prod\limits_{i = 1}^n {\left( {b_{ij} - a_{ij} } \right)}
} $. Depending on this, we understand  $\int_A^B {f\left( X
\right)dX}$ to be:
\begin{align*}
\int_A^B {f\left( X \right)dX}  = \int_{a_{nn} }^{b_{nn} } {
\cdots \int_{a_{11} }^{b_{11} } {f\left( {x_{11} , \ldots ,x_{nn}
} \right)dx_{11}  \cdots dx_{nn} } }.
\end{align*}

Next result illustrate a matrix version of H.--H. inequality for
function of a matrix producing a scalar:
\begin{theorem}
\label{thm4}Let $A,B\in \mathcal{M}_{n\times n} (\mathbb{R})$ with
$A < B$. Let $f:\left[ {A,B} \right] \to \mathbb{R}$ be
${\bf{n}}^2$-fold convex. Then the inequality
\begin{align}
\label{eq4.2}f\left( {\frac{{A + B}}{2}} \right) \le \frac{1}{{B -
A}}\int_A^B {f\left( X \right)dX}  \le \frac{1}{2^{n^2}}
\sum\limits_{\scriptstyle C = \left( {c_{ij} } \right)_{n \times
n}  \hfill \atop
  {\scriptstyle c_{ij}  \in \left\{ {a_{ij} ,b_{ij} } \right\} \hfill \atop
  \scriptstyle 1 \le i,j \le n \hfill}} {f\left( C \right)}.
\end{align}
holds, where
\begin{align*}
\sum\limits_{C} {f\left( {C} \right)} : =
\sum\limits_{\scriptstyle C = \left( {c_{ij} } \right)_{n \times
n}  \hfill \atop
  {\scriptstyle c_{ij}  \in \left\{ {a_{ij} ,b_{ij} } \right\} \hfill \atop
  \scriptstyle 1 \le i,j \le n \hfill}} {f\left( {c_{11},c_{12},\cdots,,c_{nn} } \right)}.
\end{align*}
The inequality (\ref{eq4.2}) is sharp. If $f$ is ${\bf{n}}^2$-fold
concave then the inequality (\ref{eq4.2}) is reversed.
\end{theorem}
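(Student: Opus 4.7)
The plan is to recognize Theorem \ref{thm4} as a direct transcription of Theorem \ref{thm2} through the identification of $\mathcal{M}_{n\times n}(\mathbb{R})$ with $\mathbb{R}^{n^2}$ that the excerpt has already set up. Under the bijection $X = (x_{ij})_{1\le i,j\le n} \longleftrightarrow (x_{11},x_{12},\ldots,x_{1n},x_{21},\ldots,x_{nn})$, the matrix-interval $[A,B]$ becomes the hypercuboid $\prod_{i,j}[a_{ij},b_{ij}] \subset \mathbb{R}^{n^2}$; the entrywise order $A\le B$ becomes the componentwise order on $\mathbb{R}^{n^2}$; the midpoint $(A+B)/2$ matches up coordinatewise; the sum $\sum_C f(C)$ runs over the $2^{n^2}$ vertices of that hypercuboid; and the multiple integral $\int_A^B f(X)\,dX$ is by definition the iterated integral over the hypercuboid, with $B-A$ interpreted as the product of the edge lengths, which is precisely the denominator that appears in Theorem \ref{thm2}.

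First I would verify that the hypothesis translates correctly: the assumption that $f:[A,B]\to\mathbb{R}$ is $\mathbf{n}^2$-fold convex is, by the equivalent characterization given after (\ref{eq2.2}), the assertion that $f$ is convex in each of the $n^2$ coordinates $x_{ij}$ separately. Viewed on $\mathbb{R}^{n^2}$ this is exactly $\mathbf{m}$-fold convexity in the sense of Section 2 with $m=n^2$.

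Then I would simply apply Theorem \ref{thm2} with $m=n^2$, $\mathbf{a}=(a_{11},\ldots,a_{nn})$ and $\mathbf{b}=(b_{11},\ldots,b_{nn})$, and relabel the resulting chain of inequalities in matrix notation to obtain (\ref{eq4.2}). For sharpness, the same witness used in Theorem \ref{thm2} transfers: the function $f(X)=\prod_{1\le i,j\le n}x_{ij}$ is multiplicatively separable, so both the normalized integral and the average over the $2^{n^2}$ corner matrices factor into products of one-dimensional means $(a_{ij}+b_{ij})/2$, yielding equality throughout. The $\mathbf{n}^2$-fold concave case follows by applying the already-proved forward direction to $-f$ and reversing signs.

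Since the result is at heart a re-indexing, there is no genuine analytic obstacle; the only care required is keeping the dictionary between $n\times n$ matrices and length-$n^2$ vectors consistent both in the integration (so that the matrix volume form coincides with Lebesgue measure on $\mathbb{R}^{n^2}$) and in the enumeration of the $2^{n^2}$ corner matrices, which the excerpt has already fixed notationally.
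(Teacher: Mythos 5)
Your proposal is correct and follows exactly the paper's route: the paper's entire proof of Theorem \ref{thm4} is the single sentence ``The proof follows directly from Theorem \ref{thm2},'' and your identification of $\mathcal{M}_{n\times n}(\mathbb{R})$ with $\mathbb{R}^{n^2}$, together with the translation of the order, the interval, the integral, the corner sum, and the sharpness witness $f(X)=\prod_{i,j}x_{ij}$, is precisely the dictionary that sentence relies on. You have merely made explicit what the paper leaves implicit, so no further comparison is needed.
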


\begin{proof}
The proof follows directly from Theorem \ref{thm2}.
\end{proof}

\begin{remark}
A Jensen's type inequality for matrix functions used above; may be
deduced in a similar manner as in Theorem {\ref{thm1}}.

\end{remark}

\bibliographystyle{amsplain}

\end{document}